\newtheorem{thm}{Theorem}[section]
\newtheorem{Thm}{Theorem}
\newtheorem*{thma}{Theorem A}
\newtheorem*{thmb}{Theorem B}
\newtheorem{lem}[thm]{Lemma}
\theoremstyle{definition}
\numberwithin{equation}{section}
\newcommand{\D}{\mathbb{D}}
\renewcommand{\geq}{\geqslant}
\renewcommand{\leq}{\leqslant}
\renewcommand{\mod}[1]{{\ifmmode\text{\rm\ (mod~$#1$)}\else\discretionary{}{}{\hbox{ }}\rm(mod~$#1$)\fi}}
\renewcommand{\Re}{\textup{Re }} 
\begin{document}

\baselineskip=17pt

\title{Lower bounds on odd order character sums}

\author{Leo Goldmakher}
\address{Department of Mathematics, University of Toronto, 40 St. George Street, Toronto, M5S 2E4, Canada}
\email{lgoldmak@math.toronto.edu}

\author{Youness Lamzouri}
\address{Department of Mathematics, University of Illinois at Urbana-Champaign,
1409 W. Green Street,
Urbana, IL, 61801
USA}
\email{lamzouri@math.uiuc.edu}

\date{}

\begin{abstract}
A classical result of Paley shows that there are infinitely many quadratic characters $\chi\mod{q}$ whose character sums get as large as $\sqrt{q}\log \log q$; this implies that a conditional upper bound of Montgomery and Vaughan cannot be improved. In this paper, we derive analogous lower bounds on character sums for characters of odd order, which are best possible in view of the corresponding conditional upper bounds recently obtained by the first author.
\end{abstract}
\subjclass[2010]{Primary 11L40}

\thanks{This work begun while both authors participated at the AMS Mathematics Research Community \emph{``The Pretentious View of Analytic Number Theory''} held on June 26- July 2, 2011 at the Snowbird Resort, Utah. The second author is supported by a postdoctoral fellowship from the Natural Sciences and Engineering Research Council of Canada.}

\maketitle

\section{Introduction}

In this paper we obtain lower bounds on the quantity
\[
M(\chi) := \max_{t} \left|\sum_{n \leq t} \chi(n)\right|
\]
for Dirichlet characters $\chi\mod{q}$ of odd order. The study of character sums and associated quantities such as $M(\chi)$ has been a central topic of analytic number theory for a long time. The first result in this area, discovered independently by P\'{o}lya and Vinogradov in 1918, asserts that
\[
M(\chi) \ll \sqrt{q} \log q
\]
holds uniformly over all characters $\chi\mod{q}$. This bound has resisted any improvement outside of special cases. However, conditionally on the Generalized Riemann Hypothesis, Montgomery and Vaughan \cite{MVExpSumsWMultCoeff} were able to improve this to
\[
M(\chi) \ll \sqrt{q} \log \log q .
\]
For quadratic characters this is known to be optimal, thanks to an unconditional lower bound due to Paley \cite{Paley}. Furthermore, assuming the GRH, Granville and Soundararajan \cite{GS} have extended Paley's lower bound to characters of all even orders.

The story took an unexpected turn when Granville and Soundararajan discovered that both the P\'{o}lya-Vinogradov and the Montgomery-Vaughan bounds can be improved for characters of odd order. In \cite{GS}, they showed that for all characters $\chi\mod{q}$ of odd order $g \geq 3$, there exists $\delta_g > 0$ such that
\[
M(\chi) \ll_g \sqrt{q} (\log q)^{1 - \delta_g + o(1)}
\]
unconditionally and
\[
M(\chi) \ll_g \sqrt{q} (\log \log q)^{1 - \delta_g + o(1)}
\]
conditionally on GRH. (Here $o(1) \to 0$ as $q \to \infty$.) After developing their ideas further, Goldmakher \cite{Go} showed that these bounds hold with
\[
\delta_g = 1 - \frac{g}{\pi} \sin \frac{\pi}{g} .
\]
Moreover, conditionally on GRH, he proved that this value is optimal. To be precise, on GRH he showed that for any $\epsilon >0$ and any fixed odd integer $g \geq 3$, there exist arbitrarily large $q$ and primitive characters $\chi\mod{q}$ of order $g$ satisfying
\begin{equation}
\label{eq:LowerBound}
M(\chi) \gg_{g,\epsilon} \sqrt{q} (\log \log q)^{1 - \delta_g - \epsilon} .
\end{equation}
The goal of the present article is to establish the same result unconditionally.

Recent progress on character sums was made possible by Granville and Soundararajan's discovery that $M(\chi)$ depends on the extent to which $\chi$ mimics the behavior of other characters. To measure this mimicry, they introduced the symbol
\[
\D(\chi, \psi; y) :=
    \left( \sum_{p \leq y} \frac{1 - \Re \chi(p) \overline{\psi(p)}}{p} \right)^{1/2} .
\]
It turns out that this defines a pseudometric on the space of characters, and has a number of interesting properties; see \cite{GSCuriousZetaInequalities} for an in-depth discussion. In \cite{GS}, Granville and Soundararajan derive a number of upper and lower bounds on $M(\chi)$ in terms of $\D(\chi, \psi; y)$, where $\psi$ is a character of small conductor and opposite parity to $\chi$ (i.e.\ $\chi(-1)\psi(-1) = -1$). For example, they prove the following:
\begin{thma}[Theorem 2.5 of \cite{GS}]
Assume GRH. Let $\chi\mod{q}$ and $\psi\mod{m}$ be primitive characters with $\psi(-1)=-\chi(-1)$. Then
\[
M(\chi)+\frac{\sqrt{qm}}{\phi(m)}\log\log\log q
 \gg
\frac{\sqrt{qm}}{\phi(m)}\log\log q\exp\left(-\D(\chi, \psi;\log q)^2\right).
\]
\end{thma}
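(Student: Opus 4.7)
The plan is to compare two estimates of a carefully chosen weighted double sum and extract the bound on $M(\chi)$. Consider
\[
T := \sum_{a=1}^{m}\bar\psi(a)\sum_{n\leq aq/m}\chi(n).
\]
Since $\bar\psi(a)=0$ unless $(a,m)=1$, the triangle inequality gives $|T|\leq\phi(m)\,M(\chi)$ immediately. The task reduces to proving $|T|\gg\sqrt{qm}\,\log\log q\,\exp(-\D(\chi,\psi;\log q)^2)$ up to an error of order $\sqrt{qm}\,\log\log\log q$; the latter, divided by $\phi(m)$, is exactly the additive correction appearing on the left of Theorem A.

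To evaluate $T$ I would apply the P\'olya Fourier expansion of the primitive character $\chi$ to the inner sum at $x=aq/m$:
\[
\sum_{n\leq x}\chi(n)=\frac{\tau(\chi)}{2\pi i}\sum_{1\leq|k|\leq K}\frac{\bar\chi(k)}{k}\bigl(1-e(-kx/q)\bigr)+O\!\left(\frac{q\log q}{K}\right),
\]
with $K$ chosen as a small power of $q$. Summing against $\bar\psi(a)$, the ``$1$'' term vanishes by orthogonality of $\bar\psi$, while $\sum_{a\bmod m}\bar\psi(a)e(-ka/m)=\psi(-k)\,\tau(\bar\psi)$ since $\psi$ is primitive. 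After interchanging sums,
\[
T=-\frac{\tau(\chi)\tau(\bar\psi)}{2\pi i}\sum_{1\leq|k|\leq K}\frac{\bar\chi(k)\,\psi(-k)}{k}+(\text{error}).
\]
Here the parity hypothesis $\chi(-1)\psi(-1)=-1$ is essential: it forces $\bar\chi\psi$ to be an odd character, so terms at $\pm k$ reinforce and the series converges to $2\psi(-1)$ times a truncation of $L(1,\bar\chi\psi)$. With the opposite parity the two pieces would cancel exactly, and $T$ would be small.

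It remains to estimate $|L(1,\bar\chi\psi)|$ from below under GRH (this has the same modulus as $L(1,\chi\bar\psi)$, the $L$-value naturally attached to $\D(\chi,\psi;\cdot)$). Under GRH one has the short-Euler-product approximation
\[
\log L(1,\bar\chi\psi)=\sum_{p\leq\log q}\frac{\bar\chi\psi(p)}{p}+O\!\left(\frac{\log\log\log q}{\log\log q}\right),
\]
so Mertens' theorem and the definition of $\D$ give $|L(1,\bar\chi\psi)|\asymp\log\log q\cdot\exp(-\D(\chi,\psi;\log q)^2)$. Combining with $|\tau(\chi)|=\sqrt{q}$, $|\tau(\bar\psi)|=\sqrt{m}$, and dividing by $\phi(m)$, the claimed bound follows. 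The main obstacle I anticipate is this sharp GRH estimate for $\log L(1,\bar\chi\psi)$ as a prime sum of length only $\log q$: it requires a smoothed Perron-type contour shift exploiting a GRH zero-free region, and this is precisely the step that produces the $\log\log\log q$ correction. The Fourier manipulation and parity trick are routine by comparison, though care is needed with the truncation error in $K$, the boundary issue when $aq/m\in\mathbb{Z}$, and the contribution of small $k$ with $(k,m)>1$, to ensure none of them overwhelms the main term.
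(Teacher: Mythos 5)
First, a structural point: the paper does not actually prove Theorem A --- it is quoted from Granville and Soundararajan (Theorem 2.5 of \cite{GS}) and used as a black box --- so there is no in-paper proof to compare against. That said, your reconstruction is the classical Paley-style argument, and it is essentially the same skeleton the authors use for their unconditional Theorem 1: your quantity $T=\sum_{a}\overline{\psi}(a)\sum_{n\leq aq/m}\chi(n)$ becomes, after P\'olya's expansion, the same twisted average $\sum_{b}\psi(b)\sum_{1\leq |n|\leq N}\frac{\chi(n)}{n}e(nb/m)=2\tau(\psi)\sum_{n\leq N}\chi(n)\overline{\psi}(n)/n$ that appears in Section 3, and your parity computation (the terms at $\pm k$ reinforcing exactly when $\chi(-1)\psi(-1)=-1$) is the same as theirs. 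The genuine difference lies in the final analytic input: unconditionally one must take a maximum over truncation points $N\leq q$ (hence the Fej\'er-kernel Lemma 2.2) and then invoke Lemma 2.1 at the cost of a $\log\log\log q$ factor, whereas under GRH you bound $|L(1,\overline{\chi}\psi)|$ from below directly via Littlewood's argument. This is why your route needs neither key lemma of Section 2, and why it applies to $\chi$ of either parity: the constant term of P\'olya's formula is killed by orthogonality of $\overline{\psi}$ over $a$ rather than by the evenness of $\chi$.

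Two caveats. Your claimed GRH estimate $\log L(1,\overline{\chi}\psi)=\sum_{p\leq \log q}\overline{\chi}\psi(p)/p+O(\log\log\log q/\log\log q)$ is stronger than what is true: the prime squares alone contribute a genuine $O(1)$, and Littlewood's theorem naturally yields the prime sum up to $(\log qm)^{2}$, so that restricting to $p\leq\log q$ costs another bounded amount. Since an additive $O(1)$ in the logarithm changes $|L(1,\overline{\chi}\psi)|$ only by a bounded factor, which the $\gg$ absorbs, this is an overstated intermediate claim rather than a gap. Second, your error terms --- $O(\phi(m)\log q)$ from P\'olya's expansion and $O(\sqrt{qm}\log(qm)/K)$ from comparing the truncated sum with $L(1,\overline{\chi}\psi)$ via P\'olya--Vinogradov and partial summation --- are comfortably dominated by $\frac{\sqrt{qm}}{\phi(m)}\log\log\log q$ only when $m$ is appreciably smaller than $q$; that is the only regime in which the theorem has content (and the only one used in this paper, where $m\ll_{\epsilon}1$), but a complete write-up should either restrict $m$ or dispose of large $m$ trivially. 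Finally, your guess that the $\log\log\log q$ correction is produced by the short-Euler-product step is not borne out by your own argument, in which all errors are of size $O(\log q)$ per residue class; whatever its provenance in \cite{GS}, for your route the stated additive term is simply more room than you need.
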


In this way, the problem of bounding $M(\chi)$ is translated into that of bounding $\D(\chi,\psi;\log q)$. A consequence of Lemma 3.2 of \cite{GS} is that whenever $\chi\mod{q}$ is a primitive character of odd order $g$ and $\xi\mod{m}$ is a primitive character of opposite parity and small conductor ($m\leq (\log\log q)^A$),
\[
\D(\chi, \xi; \log q)^2 \geq \big(\delta_g + o(1)\big) \log \log \log q .
\]
Goldmakher, using a reciprocity law for the $g^\text{th}$-order residue symbol and the Chinese Remainder Theorem, proved a complementary result:
\begin{thmb}[\cite{Go}, Section 9]
Let $g\geq 3$ be a fixed odd integer. For any $\epsilon > 0$ there exists an odd character $\xi\mod{m}$ with $m\ll_{\epsilon}1$ and an infinite
family of primitive characters $\chi\mod{q}$ of order $g$ such that
\[
\D(\chi, \xi; \log q)^2 \leq (\delta_g + \epsilon) \log \log \log q .
\]
\end{thmb}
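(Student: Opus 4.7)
My plan is to construct, for each $\epsilon > 0$, a fixed odd auxiliary character $\xi$ of conductor $m = O_\epsilon(1)$ and, for each sufficiently large prime $q$ in a carefully chosen arithmetic progression, a character $\chi\mod{q}$ of order $g$ such that $\chi(p)$ equals the $g$-th root of unity closest to $\xi(p)$ for every prime $p \leq Y$, with $Y \asymp \log q$. This ``greedy'' matching is locally optimal: by the elementary identity
\[
\frac{1}{2\pi}\int_0^{2\pi}\Bigl(1 - \max_{0\leq k<g}\cos(\theta - 2\pi k/g)\Bigr)\,d\theta = 1 - \frac{g}{\pi}\sin\frac{\pi}{g} = \delta_g,
\]
the contribution of each such prime to the pretentious distance averages to $\delta_g/p$, exactly matching the lower bound implicit in Lemma~3.2 of \cite{GS}.

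To pick $\xi$, I would take $m = m(\epsilon)$ to be an odd prime large enough that the discrete Riemann sum
\[
\frac{1}{\phi(m)}\sum_{\omega^{\phi(m)}=1}\Bigl(1 - \max_{\zeta^g=1}\Re\,\zeta\overline{\omega}\Bigr)
\]
lies within $\epsilon/3$ of $\delta_g$; this is possible because the $\phi(m)$-th roots of unity equidistribute on the unit circle as $\phi(m) \to \infty$. Let $\xi\mod{m}$ be any primitive character of order $\phi(m)$ (for $m$ an odd prime, any such character is automatically odd). For a parameter $Y$ to be chosen, let $\zeta_p$ denote the $g$-th root of unity closest to $\xi(p)$ at each prime $p \leq Y$. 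The heart of the construction is to take $\chi$ of order $g$ modulo a prime $q \equiv 1 \mod{g}$ and invoke the $g$-th power reciprocity law (Eisenstein reciprocity in $\mathbb{Z}[\zeta_g]$) to express $\chi(p)$ in terms of congruence data of $q$ modulo primes above $p$, together with auxiliary congruences modulo $g$. The system of constraints $\chi(p) = \zeta_p$ for all $p \leq Y$ then reduces to a single congruence on $q$ modulo some $M \leq e^{(1+o(1))Y}$. By Dirichlet's theorem, combined with Linnik's effective bound $q \ll M^L$, infinitely many primes $q$ satisfy this condition and one may arrange $\log q \asymp Y$.

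With $\chi$ and $\xi$ in hand, I would split $\D(\chi,\xi;\log q)^2 = S_1 + S_2$ at the threshold $Y$. By construction $\chi(p) = \zeta_p$ for $p \leq Y$, so grouping by residue class modulo $m$ and applying Mertens in arithmetic progressions gives
\[
S_1 = \sum_{a \in (\mathbb{Z}/m\mathbb{Z})^{*}}\bigl(1 - \Re\,\zeta_a\overline{\xi(a)}\bigr)\frac{\log\log Y}{\phi(m)} + O_m(1) \leq \Bigl(\delta_g + \tfrac{\epsilon}{3}\Bigr)\log\log Y + O_m(1).
\]
For the tail, the trivial bound $1 - \Re\,\chi(p)\overline{\xi(p)} \leq 2$ combined with Mertens yields $S_2 \leq 2(\log\log\log q - \log\log Y) + O(1)$; since $\log q \asymp Y$ forces $\log\log Y = \log\log\log q + O(1)$, we get $S_2 = O(1)$, so combining yields $\D(\chi,\xi;\log q)^2 \leq (\delta_g + \epsilon)\log\log\log q$ for $q$ sufficiently large.

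The main obstacle is unquestionably the reciprocity step. The naive $g$-th power residue symbol behaves cleanly only at primes $p \equiv 1 \mod{g}$; handling the remaining primes forces one to descend to $\mathbb{Z}[\zeta_g]$, track the correction factors appearing in Eisenstein reciprocity, and verify that the modulus $M$ of the resulting congruence on $q$ stays $e^{O(Y)}$ so that the effective form of Dirichlet's theorem still produces infinitely many valid $q$. This technical bookkeeping is essentially the content of Section~9 of \cite{Go}.
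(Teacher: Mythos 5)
Your outline matches the approach the paper attributes to \cite{Go}: Theorem B is quoted here rather than proved, and the method the authors describe (``a reciprocity law for the $g^{\text{th}}$-order residue symbol and the Chinese Remainder Theorem'') is exactly what you propose --- force $\chi(p)$ to be the nearest $g$-th root of unity to $\xi(p)$ for $p\leq Y\asymp\log q$ via $g$-th power reciprocity, produce $q$ with $\log q\ll Y$ by Dirichlet/Linnik, and estimate the distance by Mertens in progressions plus equidistribution of the values of $\xi$. The one step you defer --- the reciprocity bookkeeping at primes $p\not\equiv 1\mod{g}$ and the verification that the resulting modulus is $e^{O(Y)}$ --- is, as you correctly note, precisely the technical content of Section~9 of \cite{Go}, so your write-up is a faithful reconstruction of that argument rather than an independent proof.
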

\noindent
Combining Theorems A and B produces the lower bound (\ref{eq:LowerBound}) on $M(\chi)$; conditionally, since Theorem A is dependent on the GRH.

To prove (\ref{eq:LowerBound}) unconditionally, we derive an unconditional version of Theorem A. Although we cannot prove a totally analogous theorem, in the case where $\chi$ is even we are able to remove the assumption of GRH at a cost of an extra $(\log\log\log q)^{-1}$ factor:
\begin{Thm}
\label{thm:MainThm}
Let $\chi\mod{q}$ be a primitive even character and $\psi\mod{m}$ be a primitive odd character. Then
\[
M(\chi)+\sqrt{q}
\gg
\frac{\sqrt{qm}\log\log q}{\phi(m)\log\log\log q}\exp\left(-\D(\chi, \psi;\log q)^2\right).
\]
\end{Thm}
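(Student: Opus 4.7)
The plan is to mirror the structure of Granville and Soundararajan's proof of their Theorem~A, replacing the GRH input by an unconditional pretentious lower bound for $|L(1,\chi\bar\psi)|$. The two main ingredients to establish are (a) an unconditional reduction of the form
\[
M(\chi)+\sqrt{q}\;\gg\;\frac{\sqrt{qm}}{\phi(m)}\,|L(1,\chi\bar\psi)|,
\]
and (b) an unconditional bound
\[
|L(1,\chi\bar\psi)|\;\gg\;\frac{\log\log q}{\log\log\log q}\exp\bigl(-\D(\chi,\psi;\log q)^{2}\bigr).
\]
Combining them yields the theorem, with the extra $(\log\log\log q)^{-1}$ factor on the right being precisely the unconditional loss incurred in (b).

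For (a), I would invoke the P\'olya Fourier expansion for the even primitive character $\chi$ mod $q$,
\[
\sum_{n\le t}\chi(n)\;=\;\frac{\tau(\chi)}{\pi}\sum_{n=1}^{\infty}\frac{\bar\chi(n)\sin(2\pi nt/q)}{n}\;+\;O(\log q),
\]
evaluate at the sample points $t=aq/m$ with $(a,m)=1$, weight by $\bar\psi(a)$, and interchange summations. The orthogonality identity $\sum_{(a,m)=1}\bar\psi(a)\sin(2\pi an/m)=\Im(\psi(n)\tau(\bar\psi))$, which vanishes when $(n,m)>1$ because $\psi$ is primitive, collapses the resulting weighted sum to the explicit form
\[
\frac{\tau(\chi)}{2\pi i}\bigl[\tau(\bar\psi)L(1,\bar\chi\psi)\,-\,\overline{\tau(\bar\psi)}L(1,\bar\chi\bar\psi)\bigr]\;+\;O\bigl(\phi(m)\log q\bigr),
\]
which is simultaneously bounded in modulus by $\phi(m)\,M(\chi)$. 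Since $|\tau(\chi)|=\sqrt{q}$ and $|\tau(\bar\psi)|=\sqrt{m}$, isolating $|L(1,\chi\bar\psi)|$ from the companion $L(1,\chi\psi)$ term---by combining the above with a second evaluation (such as at $t=aq/m+q/(2m)$, whose sine-addition formula produces a linearly independent equation in the same pair of $L$-values)---produces the claimed reduction, with the P\'olya error $O(\log q)$ absorbed into the additive~$\sqrt{q}$.

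For (b), I would apply the standard unconditional pretentious machinery: write $\log L(1,\chi\bar\psi)$ as a Perron-type integral of a smoothed $1/s$-kernel against $\log L(s,\chi\bar\psi)$, shift the contour slightly to the left of $\Re s=1$, and use a classical (Landau or Vinogradov--Korobov type) zero-free region for $L(s,\chi\bar\psi)$ to control the shift. This yields
\[
|L(1,\chi\bar\psi)|\;\gg\;\frac{1}{\log\log\log q}\exp\!\Bigl(\sum_{p\le\log q}\frac{\Re\chi\bar\psi(p)}{p}\Bigr),
\]
with the factor $(\log\log\log q)^{-1}$ being exactly the price of unconditionality relative to GRH. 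Mertens' theorem $\sum_{p\le\log q}p^{-1}=\log\log\log q+M+o(1)$ and the definition of $\D$ then transform the short Euler product into $(e^{M}+o(1))(\log\log q)\exp(-\D(\chi,\psi;\log q)^{2})$, completing (b).

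The main obstacle is the decoupling in (a): the two $L$-values enter the formula for the weighted sum with complex coefficients of equal magnitude, and their phases can a priori align so that the linear combination is smaller than either term individually. Designing the second evaluation to break this symmetry---so that the resulting $2\times 2$ linear system in $L(1,\chi\bar\psi)$ and $L(1,\chi\psi)$ has a bounded inverse, while still respecting the bound by $\phi(m)\,M(\chi)$---is the technical heart of the argument, and also where the additive error $\sqrt{q}$ on the left of the theorem will arise.
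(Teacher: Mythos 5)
There is a genuine gap, and it is in step (b). The bound
\[
|L(1,\chi\bar\psi)|\;\gg\;\frac{1}{\log\log\log q}\exp\Bigl(\sum_{p\le\log q}\frac{\Re\chi\bar\psi(p)}{p}\Bigr)
\]
is not an ``unconditional version'' of the pretentious machinery --- it is essentially Littlewood's GRH theorem. No classical or Vinogradov--Korobov zero-free region lets you truncate the Euler product of $L(s,\chi\bar\psi)$ at $s=1$ at height $\log q$; the best such regions permit truncation at roughly $\exp\bigl((\log q)^{2/3+\epsilon}\bigr)$, and primes between $\log q$ and that height can conspire to make $L(1,\chi\bar\psi)$ small even when $\D(\chi,\psi;\log q)$ is small. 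Concretely, the theorem allows $\chi$ to be an even quadratic character and $\psi$ the odd character mod $3$, so $\chi\bar\psi$ can be an odd quadratic character; since $\D(\chi,\psi;\log q)^2\le 2\log\log\log q+O(1)$ always, your (b) would yield an \emph{effective} bound $L(1,\chi\bar\psi)\gg 1/(\log\log q\,\log\log\log q)$ for such characters, which runs straight into the Siegel-zero barrier. The paper sidesteps this by never evaluating at $s=1$: its Lemma 2.1 works at $s=1+\delta$ with $\delta=\log\log\log q/\log\log q$, where the Euler product genuinely depends only on $p\le e^{1/\delta}\le\log q$, and the price is that $L(1+\delta,\chi\bar\psi)$ is controlled not by $L(1,\chi\bar\psi)$ but by $\max_{N\le\log q}\bigl|\sum_{n\le N}\chi\bar\psi(n)/n\bigr|$ via partial summation.

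This forces a corresponding change in your step (a): you must bound the entire family of truncated sums $\sum_{n\le N}\chi\bar\psi(n)/n$, $N\le q$, by $M(\chi)$, not just the single limiting value $L(1,\chi\bar\psi)$. The paper does this with a Fej\'er-kernel argument (Lemma 2.2) showing that $\max_\theta\bigl|\sum_{1\le|n|\le q}\chi(n)e(n\theta)/n\bigr|$ dominates $\max_\theta\max_{N\le q}$ of the truncations, and only then twists by $\psi$. Two smaller remarks: the ``decoupling'' of two $L$-values that you identify as the technical heart of (a) is a phantom problem --- since $\psi(-1)=-1$, the weighted sum $\sum_{b\,(m)}\psi(b)\sum_{1\le|n|\le N}\chi(n)e(nb/m)/n$ collapses to the single term $2\tau(\psi)\sum_{n\le N}\chi\bar\psi(n)/n$, so no $2\times 2$ system is needed; and the $(\log\log\log q)^{-1}$ loss does not come from a zero-free region but from the elementary estimate $\sum_{e^{1/\delta}<p\le\log q}1/p=\log\log\log\log q+O(1)$ incurred when extending the prime sum from $e^{1/\delta}$ to $\log q$.
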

\noindent
Combining Theorem \ref{thm:MainThm} with Theorem B, we deduce the desired lower bound.
\begin{Thm}
Let $g\geq 3$ be a fixed odd integer. There exist arbitrarily large $q$ and primitive characters $\chi\mod{q}$ of order $g$ such that
\[
M(\chi)
\gg_{g,\epsilon}
\sqrt{q}(\log\log q)^{1-\delta_g-\epsilon},
\]
where $\delta_g=1-\frac{g}{\pi}\sin \frac{\pi}{g}.$
\end{Thm}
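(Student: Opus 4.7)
The proof plan is to combine Theorem \ref{thm:MainThm} with Theorem B, exactly paralleling how the conditional lower bound \eqref{eq:LowerBound} was deduced in \cite{Go} from Theorems A and B. The only subtlety is that Theorem \ref{thm:MainThm} requires $\chi$ even, but this is automatic here: since $\chi$ has odd order $g$, the value $\chi(-1) \in \{\pm 1\}$ satisfies $\chi(-1)^g = 1$, forcing $\chi(-1) = 1$. Thus every primitive character of odd order $g \geq 3$ is even, so Theorem \ref{thm:MainThm} applies with any odd $\psi$.

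First I would invoke Theorem B with $\epsilon$ replaced by $\epsilon/2$ to produce an odd primitive character $\xi \mod{m}$ with $m \ll_{\epsilon} 1$, together with an infinite family of primitive characters $\chi \mod{q}$ of order $g$ satisfying
\[
\D(\chi, \xi;\log q)^2 \leq \left(\delta_g + \tfrac{\epsilon}{2}\right) \log\log\log q.
\]
Next I apply Theorem \ref{thm:MainThm} with $\psi = \xi$. Since $m$ is bounded in terms of $\epsilon$, we have $\sqrt{m}/\phi(m) \gg_{\epsilon} 1$, and
\[
\exp\left(-\D(\chi,\xi;\log q)^2\right) \geq (\log\log q)^{-\delta_g - \epsilon/2}.
\]
Putting this into Theorem \ref{thm:MainThm} yields
\[
M(\chi) + \sqrt{q} \gg_{g,\epsilon} \sqrt{q} \cdot \frac{(\log\log q)^{1-\delta_g-\epsilon/2}}{\log\log\log q}.
\]

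Finally I would absorb the spurious $\log\log\log q$ denominator using the slack between $\epsilon/2$ and $\epsilon$: for $q$ sufficiently large,
\[
\frac{(\log\log q)^{1-\delta_g-\epsilon/2}}{\log\log\log q} \geq (\log\log q)^{1-\delta_g-\epsilon}.
\]
Since $\delta_g = 1 - \frac{g}{\pi}\sin\frac{\pi}{g} < 1$, the exponent $1-\delta_g-\epsilon$ is positive for small $\epsilon$, so $(\log\log q)^{1-\delta_g-\epsilon} \to \infty$ with $q$; this dominates the harmless $\sqrt{q}$ on the left, giving
\[
M(\chi) \gg_{g,\epsilon} \sqrt{q}(\log\log q)^{1-\delta_g-\epsilon}
\]
for infinitely many $q$ from the family produced by Theorem B. There is no genuine obstacle here: all the real work has been pushed into Theorem \ref{thm:MainThm} (removing GRH from Theorem A at the cost of a $\log\log\log q$ factor) and into Theorem B (the construction of a good mimicking character $\xi$). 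The only point requiring care is the bookkeeping of the $\epsilon$ and the $\log\log\log q$ factor, handled by taking Theorem B with $\epsilon/2$ in place of $\epsilon$.
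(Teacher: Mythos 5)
Your proposal is correct and follows exactly the route the paper takes: the paper's entire proof of this theorem is the single sentence ``Combining Theorem \ref{thm:MainThm} with Theorem B, we deduce the desired lower bound,'' and you have simply filled in the implicit details (oddness of the order forces $\chi(-1)=1$, the $\epsilon/2$ bookkeeping, and absorbing the $\log\log\log q$ and the $\sqrt{q}$ terms), all correctly.
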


\section{The key lemmas}

In this section, we prove two general results which will be the main ingredients in the proof of Theorem 1, and might also be of independent interest.
\begin{lem}
\label{lem:2-1}
Let $f$ be a multiplicative function such that $|f(n)|\leq 1$ for all $n\geq 1$, and let $y$ be a large real number. Then
\[
\max_{N\leq y}\left|\sum_{n\leq N}\frac{f(n)}{n}\right|+\frac{1}{\log\log y}
\gg
\frac{\log y}{\log\log y}\exp\left(-\D(f,1;y)^2\right).
\]
\end{lem}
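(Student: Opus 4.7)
The plan is to relate the partial sums $S(N) := \sum_{n\leq N}f(n)/n$ to the truncated Euler product $P(y) := \prod_{p\leq y}(1-f(p)/p)^{-1}$. First, the expansion $-\log(1-z) = z + O(|z|^2)$ applied termwise and Mertens' theorem give
\[
\log|P(y)| = \sum_{p\leq y}\frac{\Re f(p)}{p} + O(1) = \log\log y - \D(f,1;y)^2 + O(1),
\]
so $|P(y)| \asymp \log y \cdot \exp(-\D(f,1;y)^2)$. Setting $M := \max_{N\leq y}|S(N)|$, the lemma reduces to showing $M + 1/\log\log y \gg |P(y)|/\log\log y$.

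Next I would derive a convolution identity connecting $S(y)$ with a prime sum. Using $f(n)\log n = (\Lambda_f \ast f)(n)$, where $\Lambda_f$ is the $f$-von Mangoldt function (supported on prime powers, with $\Lambda_f(p) = f(p)\log p$ and $\sum_p\sum_{k\geq 2}|\Lambda_f(p^k)|/p^k = O(1)$), one obtains
\[
\sum_{n\leq y}\frac{f(n)\log n}{n} = \sum_{p\leq y}\frac{f(p)\log p}{p}\, S(y/p) + O(M).
\]
Combining this with the decomposition $\log y \cdot S(y) = \sum_{n\leq y}f(n)\log n/n + T(y)$, where $T(y) := \sum_{n\leq y}f(n)\log(y/n)/n = \int_1^y S(u)\,du/u$ satisfies the trivial bound $|T(y)|\leq M\log y$, yields the key recursion
\[
(\log y)\, S(y) = \sum_{p\leq y}\frac{f(p)\log p}{p}\, S(y/p) + T(y) + O(M). \qquad (\ast)
\]

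The main step --- and chief obstacle --- is to extract from $(\ast)$ a lower bound on $M$ matching $|P(y)|/\log\log y$. My plan is to iterate the recursion, unfolding each $\log(y/p)\, S(y/p)$ on the right via the same identity at scale $y/p$, truncated at depth $K = \lfloor\log\log y\rfloor$. After iteration, the leading contribution is a sum over $k$-tuples $(p_1,\ldots,p_k)$ of primes with $p_1\cdots p_k\leq y$ weighted by $\prod_j f(p_j)\log p_j/p_j$; a Mertens-type evaluation using $\sum_{p\leq y}f(p)/p = \log P(y) + O(1)$ should reconstruct a main contribution of size $\asymp P(y)\log y$ via the exponential expansion of $\log P(y)$. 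The accumulated $T$- and $O(M)$-errors across $K$ levels contribute at most $O(M\log y\log\log y)$, and comparing with the main term produces $M + 1/\log\log y \gg |P(y)|/\log\log y$. The delicate point is controlling the iteration when $p_1\cdots p_k$ approaches $y$, so that individual $\log(y/(p_1\cdots p_k))$ factors become small and the truncation ceases to be effective; separating tuples into balanced versus unbalanced regimes is needed, and this is where the unconditional $\log\log y$ loss in the final bound ultimately enters (compared to the GRH-conditional Theorem A, which sacrifices only $\log\log\log y$).
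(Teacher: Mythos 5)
Your opening reduction is sound: $\log|P(y)|=\sum_{p\le y}\Re f(p)/p+O(1)=\log\log y-\D(f,1;y)^2+O(1)$, so the lemma is indeed equivalent to $M+1/\log\log y\gg |P(y)|/\log\log y$. The genuine gap is the central step. Every term produced by unfolding the recursion $(\ast)$, at every depth, carries a factor $S(\cdot)$ or $T(\cdot)$, each of which is bounded by $M$ (respectively $M\log(\cdot)$). The fully unfolded identity therefore has the shape ``$O(M\log y)=$ a sum of terms each $O(M\cdot\text{something})$'', and contains no $M$-independent main term of size $\asymp |P(y)|\log y$ to compare against; no lower bound on $M$ can be extracted from it. The only $M$-free contributions come from tuples with $p_1\cdots p_k$ within a bounded factor of $y$, where $S$ collapses to $f(1)$, and you give no argument --- nor do I see one --- that these reconstruct $P(y)\log y$: the natural weight in your iteration is $\log p/(p\log x)$, not $1/p$, so what the iteration ``wants'' to produce is a Wirsing/Hal\'asz-type average, not the Euler product $\exp\bigl(\sum_{p\le y}f(p)/p\bigr)$. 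There is also a smaller defect: $|\Lambda_f(p^k)|$ can be as large as $(2^k-1)\log p$, so $\sum_{k\ge 2}|\Lambda_f(2^k)|/2^k$ is not $O(1)$ as claimed; this is repairable, but the assertion as written is false.

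The paper's proof is short and avoids all of this. From $\int_1^y\frac{\delta}{t^{1+\delta}}S(t)\,dt=\sum_{n\le y}f(n)n^{-1-\delta}-y^{-\delta}S(y)$ together with $\int_1^y\delta t^{-1-\delta}\,dt=1-y^{-\delta}$, one sees that $\bigl|\sum_{n\le y}f(n)n^{-1-\delta}\bigr|\le M$ for every $\delta>0$ (it is a convex average of the partial sums $S(t)$ and $S(y)$). Completing to the full Dirichlet series $F(1+\delta)$ costs $O(1/(\delta y^{\delta}))$, which is $O(1/\log\log y)$ for the choice $\delta=\log\log y/\log y$; and the Euler product gives $\log|F(1+\delta)|=\sum_{p\le e^{1/\delta}}\Re f(p)/p+O(1)$, which differs from $\sum_{p\le y}\Re f(p)/p$ by at most $\sum_{e^{1/\delta}<p\le y}1/p=\log\log\log y+O(1)$ --- this is precisely where the extra $(\log\log y)^{-1}$ in the statement enters. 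I would recommend abandoning the iteration and running this argument instead.
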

\emph{Remark.} This lemma is a generalization of Lemma 6.3 of \cite{GS}, where $y$ is a positive integer and $f$ is a primitive character modulo $y$. Note that in this special case the factor $(\log\log y)^{-1}$ can be removed from the RHS of the above inequality.
\begin{proof} Let $\delta>0$ be a real number to be chosen later. Then
\[
 \int_1^y\frac{\delta}{t^{1+\delta}}\sum_{n\leq t}\frac{f(n)}{n}dt
 =\sum_{n\leq y}\frac{f(n)}{n}\int_n^y\frac{\delta}{t^{1+\delta}}dt=
 \sum_{n\leq y}\frac{f(n)}{n^{1+\delta}}-y^{-\delta}\sum_{n\leq y}\frac{f(n)}{n}.
\]
Hence, using that $\int_1^y\frac{\delta}{t^{1+\delta}}dt=1-y^{-\delta}$, we derive
\begin{equation}
\label{eq:2-1}
\left|\sum_{n\leq y}\frac{f(n)}{n^{1+\delta}}\right|
\leq
\max_{N\leq y}\left|\sum_{n\leq N}\frac{f(n)}{n}\right|.
\end{equation}
For $\text{Re}(s)>1$, let $F(s)=\sum_{n=1}^{\infty}f(n)/n^{s}$ be the Dirichlet series of $f$. Then
\[
F(1+\delta)
=\sum_{n\leq y}\frac{f(n)}{n^{1+\delta}}+ O\left(\sum_{n>y}\frac{1}{n^{1+\delta}}\right)
= \sum_{n\leq y}\frac{f(n)}{n^{1+\delta}}+ O\left(\frac{1}{\delta y^{\delta}}\right).
\]
We choose $\delta=\log\log y/\log y$, which yields
\begin{equation}
\label{eq:2-2}
F(1+\delta)
=\sum_{n\leq y}\frac{f(n)}{n^{1+\delta}}+O\left(\frac{1}{\log\log y}\right).
\end{equation}
On the other hand, we obtain from the Euler product of $F(s)$
\[
\log F(1+\delta)
=\sum_{p}\frac{f(p)}{p^{1+\delta}}+O(1)
= \sum_{p\leq e^{1/\delta}}\frac{f(p)}{p^{1+\delta}}+O(1)
= \sum_{p\leq e^{1/\delta}}\frac{f(p)}{p}+O(1).
\]
Therefore, we get
\[
\left|\log|F(1+\delta)|-\sum_{p\leq y}\frac{\text{Re}f(p)}{p}\right|
\leq
\sum_{e^{1/\delta}< p\leq y}\frac{1}{p} + O(1)
=
\log\log\log y + O(1)
\]
which implies
\[
 |F(1+\delta)|
 \gg
 \frac{\log y}{\log\log y}\exp\left(-\D(f,1;y)^2\right).
\]
The lemma follows upon combining this last estimate with (\ref{eq:2-1}) and (\ref{eq:2-2}).
\end{proof}

Our second lemma is inspired by Paley's approach in \cite{Paley}.  Recall that the \emph{Fej\'er kernel}, defined by
\[
F_N(\theta)
:=\sum_{|n|\leq N}\left(1-\frac{|n|}{N}\right)e(n\theta),
\]
 where $e(t)=e^{2\pi i t}$, satisfies
\[
F_N(\theta)
= \frac{1}{N} \left(\frac{\sin(\pi N\theta)}{\sin(\pi\theta)}\right)^2
\geq 0
\quad
\text{ and }
\quad
\int_0^1 F_N(\theta)d\theta=1.
\]
Using these properties of the Fej\'er kernel, we establish the following lemma:
\begin{lem}
\label{lem:2-2}
Let $\{a(n)\}_{n\in \mathbb{Z}}$ be a sequence of complex numbers with $|a(n)|\leq 1$ for all $n$, and let $x\geq 2$ be a real number.
Then
\[
\max_{\theta\in [0,1]}\max_{1\leq N\leq x}\left|\sum_{1\leq |n|\leq N}\frac{a(n)}{n} e(n\theta)\right|
=\max_{\theta\in [0,1]}\left|\sum_{1\leq |n|\leq x}\frac{a(n)}{n} e(n\theta)\right|+O(1).
\]
\end{lem}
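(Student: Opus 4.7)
The inequality LHS $\geq$ RHS (with room to spare) is essentially free: taking $N=\lfloor x\rfloor$ in the LHS, which is allowed since $x\geq 2$, already reproduces the quantity on the RHS. So the content of the lemma is the opposite direction, LHS $\leq$ RHS $+O(1)$.

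My plan is to realize each truncated sum as a convolution of the full sum with a Fej\'er kernel, up to a cheap error term. Write
\[
T(\theta):=\sum_{1\leq|n|\leq x}\frac{a(n)}{n}e(n\theta),\qquad S_N(\theta):=\sum_{1\leq|n|\leq N}\frac{a(n)}{n}e(n\theta),
\]
and, without loss of generality, take $N$ to be a positive integer with $N\leq x$ (the sums $S_N$ are constant on unit intervals of $N$). A direct computation using orthogonality of the exponentials $e(m\cdot)$ on $[0,1]$ gives
\[
(T\ast F_N)(\theta)=\int_0^1 T(\phi)\,F_N(\theta-\phi)\,d\phi
=\sum_{1\leq|n|\leq N}\frac{a(n)}{n}\Bigl(1-\frac{|n|}{N}\Bigr)e(n\theta),
\]
since the min of $N$ and $x$ is just $N$. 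Comparing to the partial sum,
\[
S_N(\theta)-(T\ast F_N)(\theta)=\frac{1}{N}\sum_{1\leq|n|\leq N}\operatorname{sgn}(n)\,a(n)\,e(n\theta),
\]
whose modulus is at most $2$ by the triangle inequality and the hypothesis $|a(n)|\leq 1$.

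On the other hand, the two defining properties of the Fej\'er kernel, $F_N\geq 0$ and $\int_0^1 F_N=1$, force
\[
|(T\ast F_N)(\theta)|\leq \int_0^1|T(\phi)|\,F_N(\theta-\phi)\,d\phi\leq \max_\phi|T(\phi)|.
\]
Combining gives $|S_N(\theta)|\leq \max_\phi|T(\phi)|+O(1)$ for every admissible $N$ and $\theta$, and taking the maximum over the two variables yields the desired inequality. There is no genuine obstacle in this argument; the only point of mild care is that $N$ must be an integer for the orthogonality identity above to produce a clean Fej\'er-type expression, but as noted this is harmless because $S_N$ is constant between consecutive integers.
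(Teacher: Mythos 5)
Your proof is correct and follows essentially the same route as the paper: both replace the truncated sum, at the cost of an $O(1)$ error coming from the weights $1-|n|/N$, by the convolution of the full sum with the Fej\'er kernel $F_N$, and then conclude from $F_N\geq 0$ and $\int_0^1 F_N(\theta)\,d\theta=1$. Your remark that one may assume $N$ is an integer because $S_N$ is locally constant in $N$ is a harmless point the paper leaves implicit.
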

\begin{proof}
To establish the result, we only need to prove the implicit upper bound, since the implicit lower bound holds trivially. Let $\alpha\in [0,1]$, and $1\leq N\leq x$. First, note that
\[
\sum_{1\leq |n|\leq N}\frac{a(n)}{n} e(n\alpha)
=\sum_{1\leq |n|\leq N}\frac{a(n)}{n} e(n\alpha)\left(1-\frac{|n|}{N}\right)+O(1).
\]
Moreover, we have
\begin{equation*}
\begin{aligned}
\sum_{1\leq |n|\leq N}\frac{a(n)}{n} e(n\alpha)\left(1-\frac{|n|}{N}\right)
&= \sum_{1\leq |n|\leq x}\frac{a(n)}{n} e(n\alpha)\sum_{|m|\leq N}\left(1-\frac{|m|}{N}\right)\int_0^1e(m\theta)e(-n\theta)d\theta\\
&=\int_0^1\left(\sum_{1\leq |n|\leq x}\frac{a(n)}{n} e\big(n(\alpha-\theta)\big)\right)F_N(\theta)d\theta.
\end{aligned}
\end{equation*}
Thus, we obtain
\[
\left|\sum_{1\leq |n| \leq N}\frac{a(n)}{n} e(n\alpha)\right|
\leq
\max_{\theta\in [0,1]}\left|\sum_{1\leq |n|\leq x}\frac{a(n)}{n} e(n\theta)\right|+O(1).
\]
Since $\alpha$ and $N$ were arbitrary the result follows.
\end{proof}

\section{Proof of Theorem 1}

Given a primitive character $\chi\mod{q}$, recall P\'olya's fourier expansion \cite{MVbook}:
\[
\sum_{n\leq t}\chi(n)
= \frac{\tau(\chi)}{2\pi i}\sum_{1\leq |n|\leq q}\frac{\overline{\chi}(n)}{n}\left(1-e\left(-\frac{nt}{q}\right)\right)+O(\log q),
\]
where
\[
\tau(\chi):=\sum_{b\mod{q}} \chi(b)e(b/q),
\]
is the \emph{Gauss sum}.
It follows that if $\chi$ is even,
\[
\sum_{n\leq t}\chi(n)
= -\frac{\tau(\chi)}{2\pi i}\sum_{1\leq |n|\leq q}\frac{\overline{\chi}(n)}{n}e\left(-\frac{nt}{q}\right)+O(\log q),
\]
and therefore we get in this case
\begin{equation}
\label{eq:3-1}
M(\chi)+\log q
\gg
\sqrt{q}\max_{\theta\in [0,1]}\left|\sum_{1\leq |n|\leq q}\frac{\chi(n)}{n}e(n\theta)\right|.
\end{equation}

\begin{proof}[Proof of Theorem \ref{thm:MainThm}] First, we infer from Lemma \ref{lem:2-2} that
\begin{equation}
\label{eq:3-2}
\max_{\theta\in [0,1]}\left|\sum_{1\leq |n|\leq q}\frac{\chi(n)}{n}e(n\theta)\right|
= \max_{\theta\in [0,1]}\max_{N\leq q}\left|\sum_{1\leq |n|\leq N}\frac{\chi(n)}{n}e(n\theta)\right|+O(1).
\end{equation}
Now, using that
\[
\sum_{ b\mod{m}}\psi(b)e(bn/m)=\overline{\psi}(n)\tau(\psi),
\]
for all integers $n$ (since $\psi$ is primitive, see for example \cite{Da}) we get
\begin{align*}
\sum_{b\mod{m}} \psi(b)\left(\sum_{1\leq |n|\leq N}\frac{\chi(n)}{n}e\left(\frac{n b}{m}\right)\right)&=
\sum_{b\mod{m}} \psi(b)\left(\sum_{n\leq N}\frac{\chi(n)}{n}\left(e\left(\frac{n b}{m}\right)-e\left(\frac{-n b}{m}\right)\right)\right)\\
&= 2\tau(\psi)\sum_{n\leq N}\frac{\chi(n)\overline{\psi}(n)}{n},
\end{align*}
since $\psi(-1)=-1$. We therefore obtain
\[
\max_{\theta\in [0,1]}\left|\sum_{1\leq |n|\leq N}\frac{\chi(n)}{n}e(n\theta)\right|\gg \frac{\sqrt{m}}{\phi(m)}\left|\sum_{n\leq N}\frac{\chi(n)\overline{\psi}(n)}{n}\right|.
\]
Combining this estimate with (\ref{eq:3-1}) and (\ref{eq:3-2}) we deduce
\[
M(\chi)+\sqrt{q}\gg \frac{\sqrt{qm}}{\phi(m)}\max_{N\leq q}\left|\sum_{n\leq N}\frac{\chi(n)\overline{\psi}(n)}{n}\right|.
\]
Finally, appealing to Lemma \ref{lem:2-1} we find
\[
\max_{N \leq q}
    \left|\sum_{n\leq N}\frac{\chi(n)\overline{\psi}(n)}{n}\right| + 1
\geq
\max_{N \leq \log q}
    \left|\sum_{n\leq N}\frac{\chi(n)\overline{\psi}(n)}{n}\right| + 1
\gg
    \frac{\log\log q}{\log\log\log q} \exp\left(-\D(\chi,\psi;\log q)^2\right),
\]
which completes the proof.
\end{proof}

\end{document}